\documentclass{amsart}

\usepackage{ifthen}
\usepackage{graphicx}
\usepackage{amssymb}
\usepackage{enumerate}
\usepackage{url}
\newtheorem{anyprop}{Anyprop}[section]

\newtheorem{theorem}[anyprop]{Theorem}
\newtheorem{lemma}[anyprop]{Lemma}
\newtheorem{proposition}[anyprop]{Proposition}

\theoremstyle{definition}

\newtheorem{definition}[anyprop]{Definition}

\newtheorem{example}[anyprop]{Example}


\theoremstyle{remark}

\numberwithin{equation}{section}

\usepackage{amscd}
\usepackage{amssymb}
\input xy
\xyoption{all}

\setlength{\textwidth}{28pc} \setlength{\textheight}{43pc}

\begin{document}
\title[ON THE TOP. STRUC. OF THE (NON-)F.G. LOCUS OF SOME FROBENIUS ALG.]
{ON THE TOPOLOGICAL STRUCTURE OF THE (NON-)FINITELY-GENERATED LOCUS OF FROBENIUS ALGEBRAS EMERGING FROM STANLEY-REISNER RINGS}

\author[E. Gallego]{Edisson Gallego}
\author[J. D. Velez]{Juan D. V\'elez}
\author[S. D. Molina]{Sergio D. Molina-Aristizabal}
\author[J. P. Hernandez]{Juan P. Hernandez-Rodas}
\author[D. A. J. G\'omez]{Danny A. J. G\'omez-Ram\'irez}

\address{University of Antioquia, Medell\'in, Colombia}
\address{Universidad Nacional de Colombia, Escuela de Matem\'aticas, Medell\'in, Colombia.}
\address{University of Cincinnati, Ohio, USA.}
\address{Universidad Nacional de Colombia, Manizales, Colombia.}
\address{Parque Tech at the Instituci\'on Universitaria Pascual Bravo, Medell\'in, Colombia.}

\email{egalleg@gmail.com}
\email{jdvelez@unal.edu.co}
\email{sdaladierm@hotmail.com}
\email{jphernand@unal.edu.co}
\email{daj.gomezramirez@gmail.com}



\begin{abstract}
{
In this paper we study initial topological properties of the  (non-)finitely-generated locus
of Frobenius Algebra coming from Stanley-Reisner rings defined through face ideals. More specifically, we will give a partial answer to a conjecture made by M. Katzman about the openness of the finitely generated locus of such Frobenius algebras. This conjecture can be formulated in a precise manner:
Let us define 

\begin{equation*}
U=\{P\in \text{Spec}(R)~:~\mathcal{F}(E_{R_{p}})~\text{is a finitely
generated}~R_{p}\text{-algebra}\}
\end{equation*}%
where $\mathcal{F}$ denotes the Frobenius functor and $E_{R_{p}}$ the
injective hull of the residual field of the local ring $(R_{p},pR_{p})$.
Is the locus $U$ an open set in the Zariski Topology? In the case where $R$ is a ring of the form $R=K[[x_{1},\dots
,x_{n}]]/I,$~ where~ $I\subset R$ is a face ideal, i.e., \emph{square-free
monomial ideal}, we show that the corresponding $U$ has non-empty interior. Even more, we prove that in general $U$ contains two different types of opens sets and that in specific situation its complement contains intersections of opens and closes sets in the Zariski topology. Furthermore, we explicitly verify in some non-trivial examples that $U$ is an non-trivial open set. 
}
\end{abstract}

\maketitle

\noindent Mathematical Subject Classification (2010): 13J10, 13C99, 16B99

\smallskip

\noindent Keywords: Frobenius Algebras, (Non-)Finitely-Generated Locus, Stanley-Reisner Rings

\section*{Introduction}

The notion of Frobenius algebra occupies a quite outstanding role in module theory and in some parts of representation theory. More specifically a Frobenius algebra can be obtained as a finite-dimensional associative algebra $A$ over a field $k$, together with a nondegenerate bilinear form $\alpha:A\times A\rightarrow K$ that commutes (regarding the order of the operations) in a compatible way with the product of the algebra $``\cdot"$, i.e. for all $x,y,z\in A$, $(x\cdot y)\alpha z=x\alpha (y\cdot z)$ (to obtain a deeper intuition about this property, the reader can consider as an enlightening example the equivalent compatibility property that the vector product ($\alpha$) and the scalar product ($\cdot$) fulfills in the $3-$dimensional real space, i.e., both expressions computes the volume of the corresponding parallelepipede).

It is an elementary verification to show that one can generate natural Frobenius algebras starting with an associative algebra $B$, equipped with a $k-$linear compatible product $\mu:B\times B\rightarrow B$ and a $k-$linear unit map $\eta:B\rightarrow k$ (satisfying all the natural properties), considering the corresponing dual Frobenius coalgebra (induced by dualizing over $k$ all the operations and doing the corresponding identifications, e.g. $ B^*\cong B$), and defining the Frobenius form $\sigma$ as the composition of the former identification with the dual of the unit map and with algebra product $\mu$. 

Finally, the nondegeneration of $\sigma$ is the only property that one needs to check by hand. One of simplest examples are the complex numbers as real vector space with the natural multiplication map and the inclusion as unit map. In this case the Frobeniues form corresponds to the composition of the  multiplication operation between complex numbers and the real part function \cite{skowronski}.

In this article, we will focus on a very special kind of Frobenius algebras, i.e., certain kinds of graduated homomorphisms' algebras over (special sorts of) quotients of polynomial rings and, additionaly, we study their finitely-generated locus. 

More specifically, in \cite{lyubeznik} G. Lyubeznik and K. E. Smith stated a conjecture about the finite generation of some Frobenius algebras, which was negatively solved by M. Katzman in \cite{Katzmanparameter}. Afterwards, in \cite{montaner}, the authors, based on \cite{Katzmanparameter}, were able to show that the Frobenius algebra of the inyective hull of a complete Stanley-Reisner ring is either principaly or infitelly generated. 

In this paper, we use some techniques developed in \cite{montaner} to prove that the topological interior of the finitely-generated locus of the Frobenius algebra of the inyective hull of the residual field of a quotient of the polynomial ring in finitelly-many variables by a square-free ideal is nonempty. 
This work is partially based on the results shown firstly in \cite[Ch.2]{gallegothesis}.

In the next section we will introduce the non-specialized reader to some of the most technical notion needed along the article.

\section{Some Important Preliminaries}

Let $R$ be a ring of characteristic $p,$ and $e\geq 0$ be any integer. For
any $R$-module $M$ we define~ $\mathcal{F}^{e}(M)=\text{Hom}^{e}(M,M),$~ the
set of additive maps from~ $M$~ to~ $M$~ which are~\emph{\ }$r^{p^{e}}$\emph{-linear}%
; that is,~ $\varphi $ is in $\mathcal{F}^{e}(M)$~ if it satisfies~ $\varphi
(rx)=r^{p^{e}}\varphi (x),$~ for~ all $r\in R$,~ $x\in M$. Also note that $\varphi \in \mathcal{F}^{e}(M),$~ and~ $\psi \in
\mathcal{F}^{e^{\prime }}(M)$,~ the map~ $\varphi \cdot \psi =\varphi \circ
\psi $ is an element of $\mathcal{F}^{e+e^{\prime }}(M)$. \newline
Consider the $e$-th Frobenius homomorphism~ $f^{e}:R\rightarrow R,$~ given
by~ $f^{e}(r)=r^{p^{e}},$~ and let~ $F_{\ast }^{e}R$~ denote the ring~ $R$~
with the product given by the Frobenius morphism. That is,~ $x\cdot
r=r^{p^{e}}x$, for~ $r\in R$,~ $x\in F_{\ast }^{e}R$.\newline
When $e=1$, the \emph{Frobenius skew polynomial ring} over $R$, $R[x,f]$, is
the left $R$-module freely generated by $(x^{i})_{i\in \mathbb{N}}$. That
is, it consists of all polynomial $\sum r_{i}x^{i},$ but with multiplication
given by the rule: $xr=f(r)x=r^{p}x$ for all $r\in R$, see \cite{sharp}. For
any ~$R$-module~ $M$,~ we define~ $F^{e}(M)=F_{\ast }^{e}R\otimes _{R}M,$~
where~ $F_{\ast }^{e}R$~ is regarded as a right~ $R$-module. For example,~
\begin{equation*}
r^{p^{e}}x\otimes m=x\cdot r\otimes m=x\otimes rm.
\end{equation*}
We think of~ $F^{e}(M)$~ as a left~ $R$-module with the product~ $r\cdot
(s\otimes m)=rs\otimes m.$ The functor $F^{e}(-)$~ is called \emph{the }$e$%
\emph{-th Frobenius functor.}\newline
In what follows, we will use the following natural identification:
\begin{equation}
\mathcal{F}^{e}(M) \cong \textrm{Hom}_{R}(F^{e}(M),M)  \label{eq1}
\end{equation}%
Given~ $\varphi \in \mathcal{F}^{e}(M),$~ we may consider~ $\psi \in \text{%
Hom}_{R}(F^{e}(M),M),$~ defined as~ $\psi (r\otimes m)=r\varphi (m)$.~ And,
reciprocally, if we have~ $\psi \in \text{Hom}_{R}(F^{e}(M),M),$ we may~
define~ $\varphi \in \mathcal{F}^{e}(M)$~ as~ $\varphi (m)=\psi (1\otimes m)$. \\*
Now recall, if $M\subset E$ are $R$ modules, then we say that $E$ is an essential extension of $M$ if every nonzero submodule of $E$ intersects $M$ nontrivially.

As it is showed in  \cite[Proposition A3.10]{eisenbud}, if $M\subset F$ is an arbitrary extension of $M$, then there is a maximal essential extension $M\subset E\subset F$. Moreover, if $F$ is an injective $R$ module then, this essencial extension $E$ is unique up to isomorphism of $R$ modules, it is denoted as $E=E(M)$ and it is called \textbf{the injective hull of $M$}.
If $R$ denotes a commutative ring with unity and characteristic $p$, $I$ is an ideal of $R$, and $n$ is a natural number, then \emph{the} $n-$\emph{th Frobenius power} of $I$ is defined as $R-$ideal

 \[I^{[p^{n}]}=\{a^{p^{n}}|~a\in I \}R.\]

\begin{definition}
Let~ $(R,m)$~ be a local ring of characteristic~ $p>0,$~ and let us denote~
by $E=E_{R}(R/m)$~ the injective hull of the residue field \cite{eisenbud}. We
define the Frobenius algebra of~ $E$~ as
\begin{equation*}
\mathcal{F}(E)=\bigoplus_{e \geq 0} \mathcal{F}^{e}(E).
\end{equation*}
\end{definition}

We note that~ $\mathcal{F}(E)$~ is a~ $\mathbb{N}$ - graded algebra over~ $%
\mathcal{F}^{0}(E)$,~ where given~ $\varphi \in \mathcal{F}^{e}(E),$~ and~ $\psi \in
\mathcal{F}^{e^{\prime }}(E)$,~ the map~ $\varphi \cdot \psi =\varphi \circ
\psi $ is an element of $\mathcal{F}^{e+e^{\prime }}(E),$~ since~ $\varphi
\circ \psi (rx)=\varphi (r^{p^{e^{\prime }}}\psi (x))=r^{p^{e+e^{\prime
}}}\varphi \circ \psi (x)$.\newline
Now,~ $\mathcal{F}^{0}(E)=\text{Hom}^{0}(E,E)=\text{Hom}_{R}(E,E)=E^{\vee }.$%
~ It is well known that when~ $(R,m)$~ is a complete local ring, the Matlis
dual of~ $E$~ is isomorphic to~ $R$, see \cite{bruns}.
Therefore, when~ $(R,m)$~ is a complete local ring we have that~ $%
\mathcal{F}^{0}(E)=R, $~ and, hence,~ $\mathcal{F}(E)$~ is an~ $R$-algebra.\newline
~\newline

\section{Openness of the finitely generated locus of the Frobenius Algebra}

The following natural question arises: For a complete local ring~ $(R,m)$,~
is the Frobenius algebra~ $\mathcal{F}(E)$~ a finitely generated~ $R$%
-algebra? The answer in general is \textit{no}, as shown in \cite{Katzman}.
There, it is shown that for the complete local ring~ $R=K[[x,y,z]]/(xy,xz),$%
~ where~ $K$~ is a field of prime characteristic, the Frobenius algebra~ $%
\mathcal{F}(E)$~ is not finitely generated as an $R$-algebra.\newline
~In spite of that negative result, another interesting question may be
posed: Is the locus~$\ $%
\begin{equation*}
U=\{P\in \text{Spec}(R)~:~\mathcal{F}(E_{R_{p}})~\text{is a finitely
generated}~R_{p}\text{-algebra}\}
\end{equation*}
open in the Zariski Topology? This seems to be a very difficult question.
\textit{We will prove the above conjecture in the case of a ring of the
form~ }$R=K[x_{1},\dots ,x_{n}]/I,$\textit{~ where~ }$I\subset K[x_{1},\dots
,x_{n}]$\textit{~ is a square-free monomial ideal.}

Before we tackle this problem, we recall the following standard facts and definitions:

First, let $A$ be a commutative ring and $I,J\subset A$ two ideals, then the \textbf{colon ideal} $(J:_{A}I)$ (or simply $(J:I)$) represents the set of all elements $a\in A$ such that $aI\subset J$. If $A$ has characteristic $p$, let us
define $F_e=(I^{[p^{e}]}:I)$ and
\begin{equation*}
F=\underset{e\geq 0}{%
\bigoplus }F_{e}f^{e},
\end{equation*}%
the $\mathbb{N}$-graded $A$-algebra with the following product: for $%
uf^{e}\in F_{e}f^{e},$ and $u^{\prime }f^{e^{\prime }}\in F_{e^{\prime
}}f^{e^{\prime }}$, we define $uf^{e}u^{\prime }f^{e^{\prime }}=u(u^{\prime
p^{e}}f^{e+e^{\prime }})$. Let 

\[L_{e}=\underset{~}{\sum }%
F_{e_{1}}F_{e_{2}}^{[p^{e_{1}}]}\cdots F_{e_{s}}^{[p^{e_{1}+\cdots
+e_{s-1}}]},\]

with $1\leq e_{1},\ldots ,e_{s}<e$ and $e_{1}+\cdots +e_{s}=e$
, and let us define $F_{<e}$ as the subalgebra of $F$ generated by $%
F_{0}f^{0},\ldots ,F_{e-1}f^{e-1}$ \cite{Katzman}.

Second, let~ $(R,m)$~be a complete local ring of prime characteristic $p,$ and let $%
S=R/I$ be a quotient by some ideal $I\subset R$. We denote~ $%
E_{R}=E_{R}(R/m),$~ and~ $E_{S}=E_{S}(S/mS)$.~ Then, the injective hull of~ $%
S/mS$~ can be obtained as~ $\text{Hom}_{R}(S,E_{R})\cong \text{Ann}_{E_{R}}I$
. Therefore,~ $E_{S}=\text{Ann}_{E_{R}}I\subset E_{R}$, see \cite[Lemma 3.2]{Notashochster}. \newline
For a better understanding of the reading we include the proofs of the following two results:
\begin{proposition}
(\cite{Katzmanparameter}, Proposition 4.1) With the above
notation.
\begin{equation*}
\mathcal{F}^{e}(E_{S})\cong \frac{(I^{[p^{e}]}~:~I)}{I^{[p^{e}]}},
\end{equation*}%
and therefore
\begin{equation*}
\mathcal{F}(E_{S})= \bigoplus_{e \geq 0}
\frac{(I^{[p^{e}]}~:~I)}{I^{[p^{e}]}}f^{e},
\end{equation*}%
where the multiplication on the right hand side is given by $xf^{e}\cdot
yf^{e^{\prime }}=xy^{p^{e}}f^{e+e^{\prime }}.$
\end{proposition}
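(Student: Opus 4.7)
The plan is to invoke the natural identification \eqref{eq1} to reduce computing $\mathcal{F}^{e}(E_{S})$ to understanding $\text{Hom}_{R}(F^{e}(E_{S}), E_{S})$, and then identify each side via Matlis duality. The tacit hypothesis, natural in the Stanley--Reisner setting, is that the ambient ring $R$ is regular (one may take $R = K[[x_{1},\dots,x_{n}]]$), so that $F^{e}$ is exact on $R$-modules and $F^{e}(E_{R}) \cong E_{R}$.

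The first technical step is to prove $F^{e}(E_{S}) \cong E_{R/I^{[p^{e}]}} = \text{Ann}_{E_{R}}(I^{[p^{e}]})$. Applying $F^{e}$ to the short exact sequence $0 \to I \to R \to R/I \to 0$, and using $F^{e}(R) = R$ and $F^{e}(I) = I^{[p^{e}]}$ (both consequences of regularity), one obtains $0 \to I^{[p^{e}]} \to R \to R/I^{[p^{e}]} \to 0$. A parallel argument at the level of injective hulls uses $F^{e}(E_{R}) \cong E_{R}$ (a well-known consequence of regularity, together with $E_{R} = H_{m}^{n}(R)$ and the cofinality of $m^{[p^{e}]}$ with the powers of $m$) combined with the exactness of $F^{e}$ applied to the inclusion $E_{S} \hookrightarrow E_{R}$: the image of $F^{e}(E_{S})$ inside $F^{e}(E_{R}) \cong E_{R}$ is exactly $\text{Ann}_{E_{R}}(I^{[p^{e}]})$. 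This Frobenius--Matlis compatibility is the main obstacle and is where regularity of $R$ is essential.

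The second step is a Matlis-duality calculation of $\text{Hom}_{R}(E_{R/I^{[p^{e}]}}, E_{S})$. Since $R$ is a complete local ring, Matlis double-duality yields a canonical identification $\text{Hom}_{R}(E_{R/I^{[p^{e}]}}, E_{R}) \cong R/I^{[p^{e}]}$, under which a class $\bar{r}$ corresponds to the multiplication map $x \mapsto rx$ on $E_{R/I^{[p^{e}]}} \subseteq E_{R}$. Such a map lands inside $E_{S} = \text{Ann}_{E_{R}}(I)$ iff $I \cdot r \cdot E_{R/I^{[p^{e}]}} = 0$ in $E_{R}$, iff $r I \subseteq I^{[p^{e}]}$, iff $r \in (I^{[p^{e}]} : I)$. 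Combining with the first step yields
\[\mathcal{F}^{e}(E_{S}) \cong \text{Hom}_{R}(E_{R/I^{[p^{e}]}}, E_{S}) \cong (I^{[p^{e}]} : I)/I^{[p^{e}]},\]
which is the stated isomorphism in each graded piece.

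It then remains to check that the composition product on $\mathcal{F}(E_{S}) = \bigoplus_{e} \mathcal{F}^{e}(E_{S})$ corresponds under these identifications to the rule $xf^{e} \cdot y f^{e'} = x y^{p^{e}} f^{e+e'}$. One traces representatives $r \in (I^{[p^{e}]}:I)$ and $s \in (I^{[p^{e'}]}:I)$: the composition $\varphi_{r} \circ \varphi_{s}$ first applies the $p^{e'}$-linear multiplication by $s$ and then the $p^{e}$-linear multiplication by $r$, and because of the Frobenius-twisted right action inherent in $F_{\ast}^{e}R$ the inner factor $s$ is raised to the $p^{e}$-th power before being multiplied by $r$. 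This gives $r s^{p^{e}}$ as representative in $(I^{[p^{e+e'}]}:I)/I^{[p^{e+e'}]}$, exactly matching the claimed multiplicative rule and completing the proposition. The entire argument thus reduces, past the Frobenius--Matlis compatibility, to bookkeeping with Matlis duality and the definition of $F^{e}$.
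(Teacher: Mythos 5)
Your argument is correct and follows essentially the same route as the paper: both rest on the identification \eqref{eq1} together with Matlis duality and the compatibility of the Frobenius functor with duality over the complete regular ring $R$ (the paper invokes Lyubeznik's Lemma~4.1 to dualize the map $\varphi$ into a multiplication map $R/I \to R/I^{[p^{e}]}$, whereas you identify $F^{e}(E_{S})$ with $\Ann_{E_{R}}(I^{[p^{e}]})$ and compute the Hom group inside $E_{R}$ --- mirror images of the same computation). You additionally verify the twisted product rule $xf^{e}\cdot yf^{e'}=xy^{p^{e}}f^{e+e'}$, which the paper's proof leaves implicit.
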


\begin{proof}
Take~ $\varphi \in \mathcal{F}^{e}(E_{S})$.~ By~ $(\ref{eq1})$~, we
may think of~ $\varphi$~ as an element of~
$\textrm{Hom}_{R}(F^{e}(E_{S}),E_{S})$.\\
Applying the duality functor ~ $\vee =
\textrm{Hom}_{R}(\underline{~~~} , E_{R})$ to~ $\varphi$~, we get~
$\varphi^{\vee}: E_{S}^{\vee} \rightarrow F^{e}(E_{S})^{\vee}
\cong F^{e}(E_{S}^{\vee})$, (see \cite{lyubeznikfmodules}, Lemma 4.1, for
the last isomorphism).~ By Matlis duality we have~
$E_{S}^{\vee}=(S^{\vee})^{\vee} \cong S$~. Moreover, it is not difficult
to see that~ $F^{e}(S)=F^{e}(R/I) \cong
R/I^{[p^{e}]}$.\\
Therefore, we may identify~ $\varphi^{\vee}$~ with a map~
$\varphi^{\vee}: R/I \rightarrow R/I^{[p^{e}]}$~. Now, if~
$\overline{u}=\varphi^{\vee}(\overline{1})$~ then the homomorphism~
$\varphi^{\vee}$~ is just multiplication by~ $u$.~ We note that since
 ~ $\varphi^{\vee}$~ is well defined, this implies that~ $u \in
(I^{[p^{e}]}~:~I)$.\\
Define the~ $R$-homomorphism~ $\lambda: (I^{[p^{e}]}~:~I)
\rightarrow \mathcal{F}^{e}(E_{S})$~ as~ $\lambda(u)=\psi^{\vee}$~,
where~ $\psi: R/I \rightarrow R/I^{[p^{e}]}$~ is the homomorphism
given by multiplication by~ $u$.\\
The~ $R$-homomorphism~ $\lambda$~ is surjective as we saw above;
and clearly, ~ $\textrm{Ker}(\lambda)=I^{[p^{e}]}$.\\
Therefore,~ $\mathcal{F}^{e}(E_{S}) \cong
(I^{[p^{e}]}~:~I)/I^{[p^{e}]}$.
\end{proof}

\begin{lemma}
\label{generador principal}(\cite{montaner}, Lemma 2.2) With the same
notation as above. Suppose there is an element $u\in R$ such that for all $%
e\geq 0$
\begin{equation*}
(I^{[p^{e}]}:_{R}I)=I^{[p^{e}]}+(u^{p^{e}-1}).
\end{equation*}%
Then, there is an isomorphism of $S$ algebras $\mathcal{F}(E_{S})\cong
S[u^{p-1}\theta ,f]$. Here, $S[u^{p-1}\theta ,f]$ denotes the $1$-th skew
polynomial ring in the variable $u^{p-1}\theta $, (see \cite{sharp}, page
285).
\end{lemma}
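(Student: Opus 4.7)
The plan is to use Proposition 2.1 together with the hypothesis to build an explicit graded $S$-algebra isomorphism. The key idea is to extract a single degree-$1$ generator of $\mathcal{F}(E_{S})$, check that it obeys the Frobenius-skew relation, and observe that its powers exhaust every graded piece.

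First, combining Proposition 2.1 with the hypothesis presents each graded piece as a cyclic module:
\[
\mathcal{F}^{e}(E_{S})\cong (I^{[p^{e}]}:_{R}I)/I^{[p^{e}]}=\bigl(I^{[p^{e}]}+(u^{p^{e}-1})\bigr)/I^{[p^{e}]},
\]
generated over $R$ by the class of $u^{p^{e}-1}$. The inclusion $u^{p^{e}-1}\in(I^{[p^{e}]}:_{R}I)$ precisely says $I\cdot u^{p^{e}-1}\subseteq I^{[p^{e}]}$, so $I$ annihilates the class and $\mathcal{F}^{e}(E_{S})$ is in fact a cyclic $S$-module. Setting $\eta:=\overline{u^{p-1}}\,f\in\mathcal{F}^{1}(E_{S})$ and using the product rule $xf^{e}\cdot yf^{e'}=xy^{p^{e}}f^{e+e'}$ from Proposition 2.1, a short induction on $e$ gives
\[
\eta^{e}=\overline{u^{(p-1)(1+p+\cdots+p^{e-1})}}\,f^{e}=\overline{u^{p^{e}-1}}\,f^{e},
\]
the very generator just found. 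Hence $\mathcal{F}(E_{S})$ is generated as an $S$-algebra by the single element $\eta$ of degree $1$.

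Second, the same product rule yields $\eta\cdot s=s^{p}\eta$ for every $s\in S$, which is exactly the defining skew-commutation for the variable $u^{p-1}\theta$ in Sharp's skew polynomial ring. Therefore the assignment $u^{p-1}\theta\mapsto\eta$ extends uniquely to a graded $S$-algebra homomorphism
\[
\Phi:\,S[u^{p-1}\theta,f]\longrightarrow\mathcal{F}(E_{S}),
\]
which is surjective because its image contains $S$ together with every power $\eta^{e}$. To see it is injective, view $S[u^{p-1}\theta,f]$ concretely as the $S$-subalgebra of $S[\theta,f]$ generated by $u^{p-1}\theta$, whose degree-$e$ component is the cyclic module $S\cdot u^{p^{e}-1}\theta^{e}$. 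Suppose $\bar{s}\,u^{p^{e}-1}\theta^{e}$ lies in the kernel and let $\tilde{s}\in R$ be any lift of $\bar{s}$; then $\tilde{s}u^{p^{e}-1}\in I^{[p^{e}]}$, and since $I^{[p^{e}]}\subseteq I$ this gives $\bar{s}\cdot\overline{u^{p^{e}-1}}=0$ in $S$, so $\bar{s}\,u^{p^{e}-1}\theta^{e}=0$ already in $S[\theta,f]$.

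The main obstacle is keeping the two quotients straight: the $R$-module $(I^{[p^{e}]}+(u^{p^{e}-1}))/I^{[p^{e}]}$ on the one hand, and the $S$-module $S\cdot u^{p^{e}-1}\theta^{e}$ on the other. The injectivity step is the only place where the containment of Frobenius powers $I^{[p^{e}]}\subseteq I$ is used in an essential way, and it is precisely this inclusion that makes the subring interpretation of $S[u^{p-1}\theta,f]$ compatible with the Frobenius algebra; had one instead demanded isomorphism with the full free skew polynomial ring, one would need the stronger identity $(I^{[p^{e}]}:_{R}u^{p^{e}-1})=I$, which does not obviously follow from the hypothesis alone.
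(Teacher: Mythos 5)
Your overall strategy is the same as the paper's: both proofs identify $\mathcal{F}^{e}(E_{S})$ with the cyclic module generated by the class of $u^{p^{e}-1}$, use the product rule $xf^{e}\cdot yf^{e'}=xy^{p^{e}}f^{e+e'}$ together with $(p-1)(1+p+\cdots+p^{e-1})=p^{e}-1$ to see that the degree-one piece generates, and then match the graded components degree by degree; you merely build the map in the direction $S[u^{p-1}\theta,f]\to\mathcal{F}(E_{S})$, where the paper writes down the inverse $\Psi(su^{p^{e}-1}f^{e})=su^{p^{e}-1}\theta^{e}$ and calls it ``clearly a well-defined isomorphism''. Your treatment of generation, of the skew relation $\eta s=s^{p}\eta$, and of surjectivity and injectivity is more explicit than the paper's.

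The gap is the clause ``extends uniquely to a graded $S$-algebra homomorphism''. Under your reading, $S[u^{p-1}\theta,f]$ is the subalgebra of $S[\theta,f]$ generated by $\overline{u^{p-1}}\theta$, and that subalgebra is not free on its generator: its degree-$e$ component is $S\cdot\overline{u^{p^{e}-1}}\theta^{e}\cong R/(I:_{R}u^{p^{e}-1})$, whereas $\mathcal{F}^{e}(E_{S})\cong R/(I^{[p^{e}]}:_{R}u^{p^{e}-1})$. Well-definedness of $\Phi$ therefore requires $(I:_{R}u^{p^{e}-1})\subseteq(I^{[p^{e}]}:_{R}u^{p^{e}-1})$ --- exactly the kind of colon identity you set aside in your closing paragraph as relevant only to the ``free'' interpretation; your injectivity argument uses only the trivial reverse containment. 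This containment can genuinely fail under the stated hypothesis: for $R=k[[x]]$, $I=(x)$, $u=x$ one has $(I^{[p^{e}]}:I)=(x^{p^{e}-1})=I^{[p^{e}]}+(u^{p^{e}-1})$, yet $\overline{u^{p-1}}=0$ in $S=k$, so your domain collapses to $S$ in degrees $e\geq 1$ while $\mathcal{F}^{e}(E_{S})\cong k\neq 0$; the element $u^{p-1}\theta=0$ would have to map to $\eta\neq 0$, so the map you describe does not exist. The paper's proof buries the same difficulty in the word ``clearly'' (there it is the injectivity of $\Psi$). The reading consistent with Sharp's notation is the skew polynomial ring that is \emph{free} as a left $S$-module on the powers of the variable $u^{p-1}\theta$, and with that reading the missing ingredient is the identity $(I^{[p^{e}]}:_{R}u^{p^{e}-1})=I$, whose nontrivial inclusion must actually be verified before the lemma is complete.
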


\begin{proof}
We have: \[\mathcal{F}(E_{S})= \bigoplus_{e \geq 0}
\frac{(I^{[p^{e}]}~:~I)}{I^{[p^{e}]}}f^{e}=\bigoplus_{e \geq 0}
\frac{I^{[p^{e}]}+ (u^{p^{e}-1})}{I^{[p^{e}]}}f^{e}=\bigoplus_{e
\geq 0}(u^{p^{e}-1})f^{e}.\]

On the other hand: \[S[u^{p-1}\theta,f]:=\]

\[S \oplus S u^{p-1}\theta
\oplus S (u^{p-1}\theta)^{2} \oplus S (u^{p-1}\theta)^{3}\oplus
\cdots = S \oplus S u^{p-1}\theta \oplus S u^{p^{2}-1}\theta^{2}
\oplus S u^{p^{3}-1}\theta^{3}\oplus \cdots
\]
Define the $S$ homomorphism
\[\Psi:\underset{e\geq0}{\bigoplus}(u^{p^{e}-1})f^{e}\longrightarrow
\underset{e\geq0}{\bigoplus}S(u^{p-1}\theta)^{e}
\] by $\Psi(su^{p^{e}-1}f^{e})=s(u^{p-1}\theta)^{e}=su^{p^e-1}\theta^e$. This is clearly a well-defined isomorphism of $S$-algebras.

\end{proof}

Now, let $R=k[[x_{1},\ldots ,x_{n}]]$ be the formal power series ring where $%
k$ denotes a field of characteristic $p>0$ and $I\subset R$ is a square-free
monomial ideal. Then, its minimal primary decomposition $I=I_{\alpha
_{1}}\cap \cdots \cap I_{\alpha _{s}}$ is given in terms of face ideals.
That is, if $\alpha =(a_{1},\ldots ,a_{n})\in \{0,1\}^{n},$ then $I_{\alpha
}=(x_{i}|a_{i}\neq 0)$. Suppose that $\underset{}{\text{ the sum of ideals }%
\sum_{1\leq i\leq s}}I_{\alpha _{i}}$ is equal to $%
(x_{1}^{b_{1}},x_{2}^{b_{2}},\ldots ,x_{n}^{b_{n}}),$ where $\beta
=(b_{1},\ldots ,b_{n})\in \{0,1\}^{n}.$ Let us abbreviate $%
x_{1}^{b_{1}}x_{2}^{b_{2}}\cdots x_{n}^{b_{n}}$ by $x^{\beta }.$ In (\cite%
{montaner}, Proposition 3.2) the authors showed that

\begin{equation}
(I^{[p^{e}]}:_{R}I)=I^{[p^{e}]}+J_{p^{e}}+(x^{\beta })^{p^{e}-1},
\label{descripcion colon ideal en
sfree}
\end{equation}%
for any $e\geq 0$, where $J_{p^{e}}$ is either the zero ideal, or its
generators are monomials $x^{\zeta }=x_{1}^{c_{1}}\cdots x_{n}^{c_{n}}$
which satisfy $c_{i}\in \{0,p^{e}-1,p^{e}\},$ and for some $1\leq i,j,k\leq n $, we have
$c_{i}=p^{e},~c_{j}=p^{e}-1,~c_{k}=0$.

By the construction developed in \cite{montaner} one can see that by knowing $(I^{[p]}:_{R}I)$ we will readily know $%
(I^{[p^{e}]}:_{R}I),$ for any $e\geq 0$.

\begin{theorem}\label{teoremamontaner}
(\cite{montaner}, Theorem 3.5) With the previous notation and assumptions:

Let $I\subset R$ be a square-free monomial ideal, let $u=x^{\beta },$ and
let $S=R/I$. Then,

\begin{enumerate}
\item $\mathcal{F}(E_{S})\cong S[u^{p-1}\theta,f]$ is principally generated
when $J_{p}= 0$.

\item $\mathcal{F}(E_{S})$ is infinitely generated when $J_{p}\neq 0$.
\end{enumerate}
\end{theorem}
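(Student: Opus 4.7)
The plan is to reduce both claims to statements about the ideals $(I^{[p^e]}:_R I)$ as $e$ varies, since by the preceding proposition $\mathcal{F}(E_S) = \bigoplus_{e\geq 0} \frac{(I^{[p^e]}:_R I)}{I^{[p^e]}} f^e$, and the algebraic structure is entirely controlled by which new generators appear at each Frobenius degree.

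For part (1), assume $J_p = 0$. The first step is to propagate this vanishing to all levels, i.e.\ to prove $J_{p^e} = 0$ for every $e \geq 1$. This follows from the inductive Frobenius-folding construction recalled just before the theorem: each generator of $J_{p^e}$ arises, through that construction, from a generator of $J_p$; with $J_p$ empty, all higher $J_{p^e}$ are forced to vanish. Equation (\ref{descripcion colon ideal en sfree}) then collapses to $(I^{[p^e]}:_R I) = I^{[p^e]} + (x^{\beta})^{p^e - 1}$ for every $e \geq 0$, which is exactly the hypothesis of Lemma \ref{generador principal} with $u = x^{\beta}$. The conclusion $\mathcal{F}(E_S) \cong S[u^{p-1}\theta, f]$ as principally generated $S$-algebras then follows immediately.

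For part (2), assume $J_p \neq 0$ and fix a monomial generator $x^{\zeta} = x_1^{c_1}\cdots x_n^{c_n}$ of $J_p$ with distinguished coordinates $c_i = p$, $c_j = p - 1$, $c_k = 0$ at some indices $i, j, k$. The strategy is to exhibit, for every $e \geq 1$, an element of $F_e = (I^{[p^e]}:_R I)$ whose class modulo $I^{[p^e]}$ lies outside the subalgebra generated by $F_0, \ldots, F_{e-1}$. Concretely, one uses the Frobenius-folding construction of \cite{montaner} to build from $x^{\zeta}$ a monomial $x^{\zeta_e} \in J_{p^e}$ whose exponents are $p^e$ at position $i$, $p^e - 1$ at position $j$ and $0$ at position $k$, and then checks that $x^{\zeta_e} \notin L_e + I^{[p^e]}$.

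The main obstacle is precisely this non-containment. A typical element of $L_e$ is a sum of products $v_1\, v_2^{p^{e_1}} \cdots v_s^{p^{e_1+\cdots+e_{s-1}}}$ with $v_t \in F_{e_t}$ and $e_1 + \cdots + e_s = e$, so each coordinate exponent of such a product is a prescribed $p$-adic combination of values drawn from $\{0, p^{e_t} - 1, p^{e_t}\}$ (from the $J_{p^{e_t}}$ summand) and $p^{e_t} - 1$ (from the $(x^{\beta})^{p^{e_t} - 1}$ summand), modulated by tails from $I^{[p^{e_t}]}$. The plan is to perform a careful base-$p$ digit comparison at the three distinguished coordinates $i, j, k$ of $x^{\zeta_e}$ and argue that the forced digit pattern cannot be achieved by any such combination, even after absorbing an arbitrary element of $I^{[p^e]}$. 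Once a genuinely new generator is found at each level $e \geq 1$, we conclude that $\mathcal{F}(E_S)$ cannot be finitely generated as an $S$-algebra.
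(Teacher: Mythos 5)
First, a point of reference: the paper itself does not prove Theorem \ref{teoremamontaner}; it is quoted verbatim from \cite{montaner} (Theorem 3.5) and used as a black box, so your attempt has to be measured against the proof in that reference. Your part (1) is correct and is exactly the standard argument: by Proposition \ref{j} (equivalently, the remark following equation (\ref{descripcion colon ideal en sfree})) the generators of $J_{p^e}$ correspond bijectively to those of $J_p$ under the exponent substitution $p\mapsto p^e$, $p-1\mapsto p^e-1$, $0\mapsto 0$, so $J_p=0$ forces $J_{p^e}=0$ for all $e$, the colon ideals collapse to $I^{[p^e]}+((x^{\beta})^{p^e-1})$, and Lemma \ref{generador principal} applies with $u=x^{\beta}$.

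In part (2) there is a genuine gap. You correctly identify the strategy of \cite{Katzman} and \cite{montaner}: invoke Katzman's criterion identifying $\mathcal{F}_{<e}\cap\mathcal{F}^{e}(E_S)$ with $(L_e+I^{[p^e]})/I^{[p^e]}$ (this should be cited explicitly as a result of \cite{Katzman}, not treated as formal bookkeeping), and then exhibit for each $e$ the monomial $x^{\zeta_e}\in J_{p^e}$ with exponents $p^e$, $p^e-1$, $0$ at the distinguished positions and show it lies outside $L_e+I^{[p^e]}$. But the decisive step --- the non-containment $x^{\zeta_e}\notin L_e+I^{[p^e]}$ --- is only announced (``the plan is to perform a careful base-$p$ digit comparison\dots and argue that the forced digit pattern cannot be achieved''); no such comparison is actually carried out. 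This is precisely where the entire content of \cite[Theorem 3.5]{montaner} resides: one must check that no monomial generator of any product $F_{e_1}F_{e_2}^{[p^{e_1}]}\cdots F_{e_s}^{[p^{e_1+\cdots+e_{s-1}}]}$ with $e_1+\cdots+e_s=e$, $e_t<e$, can divide $x^{\zeta_e}$ --- for instance, the zero exponent at position $k$ (which lies in the support of $\beta$, since otherwise $x^{\zeta}$ would already be absorbed into $((x^{\beta})^{p-1})$) rules out any factor drawn from $((x^{\beta})^{p^{e_t}-1})$, and one must then verify that the exponents $p^e$ and $p^e-1$ at positions $i$ and $j$ cannot simultaneously arise as the required weighted sums of entries from $\{0,\,p^{e_t}-1,\,p^{e_t}\}$. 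Until that combinatorial lemma is proved, part (2) is a correct plan rather than a proof.
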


Now, we are in position to prove our new results involving topological features of the finitely generated locus of our Frobenius algebras emerging from face ideals.

\begin{theorem}\label{teorema5}
Let $R=k[[x_{1},\ldots ,x_{n}]]$ be formal power series ring in $n$ variables,
where $k$ is a field of prime characteristic $p>0$. Let us define $S=R/I,$
where $I\subset R$ is a square-free monomial ideal. Then the locus~ $%
U=\{Q\in \text{Spec}(S)~:~\mathcal{F}(E_{S_{Q}})~\text{is a
finitely generated}~S_{Q}\text{-algebra}\}$ contains the open
$U'=\text{Spec(S)}\setminus V(\text{Ann}((I^{[p]}+J_{p})/I^{[p]}))$.
\end{theorem}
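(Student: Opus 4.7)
The plan is to show that for every $Q \in U'$ the localized Frobenius algebra $\mathcal{F}(E_{S_{Q}})$ satisfies the hypothesis of Lemma~\ref{generador principal} with $u = x^{\beta}$, and is therefore principally---hence finitely---generated. Concretely, I would fix $Q \in U'$, let $P \subset R$ denote its preimage under $R \to S$ (so that $S_{Q} = R_{P}/IR_{P}$), and translate the hypothesis $Q \notin V(\mathrm{Ann}((I^{[p]}+J_{p})/I^{[p]}))$ into the equivalent vanishing
\[
\bigl((I^{[p]}+J_{p})/I^{[p]}\bigr)_{P} = 0,
\]
which says that for every monomial generator $x^{\zeta}$ of $J_{p}$ there exists $s \in R \setminus P$ with $s\,x^{\zeta} \in I^{[p]}$.

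The heart of the argument is to propagate this vanishing from $e=1$ to all $e \geq 1$. I would leverage the explicit correspondence implicit in~\cite{montaner}---the remark that $(I^{[p^{e}]}:_{R}I)$ is determined by $(I^{[p]}:_{R}I)$---between generators of $J_{p}$ and $J_{p^{e}}$: to $x^{\zeta} = \prod_{i} x_{i}^{c_{i}}$ with $c_{i} \in \{0, p-1, p\}$ corresponds $x^{\zeta'} = \prod_{i} x_{i}^{c_{i}'}$ with $c_{i}'$ obtained by the substitution $0 \mapsto 0$, $p-1 \mapsto p^{e}-1$, $p \mapsto p^{e}$. Raising $s\,x^{\zeta} \in I^{[p]}$ to the $p^{e-1}$-th power gives $s^{p^{e-1}}\,x^{p^{e-1}\zeta} \in (I^{[p]})^{[p^{e-1}]} = I^{[p^{e}]}$; since $c_{j}' - p^{e-1} c_{j} = p^{e-1} - 1$ when $c_{j} = p-1$ and $0$ otherwise, we have
\[
x^{\zeta'} = x^{p^{e-1}\zeta} \cdot \prod_{j\,:\,c_{j} = p-1} x_{j}^{p^{e-1}-1},
\]
so multiplying the earlier membership by this monomial factor keeps us inside $I^{[p^{e}]}$ and yields $s^{p^{e-1}}\,x^{\zeta'} \in I^{[p^{e}]}$, with $s^{p^{e-1}} \notin P$ since $P$ is prime.

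Consequently $(J_{p^{e}})_{P} \subseteq (I^{[p^{e}]})_{P}$ for every $e \geq 1$, and the description~(\ref{descripcion colon ideal en sfree}) reduces after localization at $P$ to
\[
(I^{[p^{e}]}:_{R}I)_{P} \;=\; (I^{[p^{e}]})_{P} + \bigl((x^{\beta})^{p^{e}-1}\bigr)_{P}
\]
for all $e \geq 0$. Applying a local version of Lemma~\ref{generador principal} to $R_{P}$, $IR_{P}$ and $u = x^{\beta}$ then gives $\mathcal{F}(E_{S_{Q}}) \cong S_{Q}[u^{p-1}\theta, f]$, which is finitely generated, establishing $Q \in U$.

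I expect the main obstacles to be twofold: (i) verifying rigorously that the shape-preserving correspondence between generators of $J_{p}$ and $J_{p^{e}}$ is exhaustive, i.e., every generator of $J_{p^{e}}$ arises from one of $J_{p}$ by the described substitution, so that the vanishing at $P$ really kills the entirety of $J_{p^{e}}$ locally; and (ii) the technical point that the earlier colon-ideal proposition and Lemma~\ref{generador principal} were proved for complete local rings via Matlis duality, whereas $R_{P}$ need not be complete---this should be handled by passing to the completion $\widehat{R_{P}}$ and descending finite generation, or by checking directly that the explicit monomial colon-ideal description survives arbitrary localization of $R$.
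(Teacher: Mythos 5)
Your proposal is correct and follows essentially the same route as the paper: translate $Q\notin V(\mathrm{Ann}((I^{[p]}+J_{p})/I^{[p]}))$ into $((I^{[p]}+J_{p})/I^{[p]})_{Q}=0$, propagate this to all Frobenius powers so that $(I^{[p^{e}]}:I)$ localizes to $I^{[p^{e}]}+((x^{\beta})^{p^{e}-1})$, pass to the completion $\widehat{S_{Q}}$, and invoke Lemma~\ref{generador principal}. Your explicit verification that $s\,x^{\zeta}\in I^{[p]}$ implies $s^{p^{e-1}}x^{\zeta'}\in I^{[p^{e}]}$ is a welcome elaboration of a step the paper delegates to the correspondence between $J_{p}$ and $J_{p^{e}}$ (Proposition~\ref{j} and the construction in \cite{montaner}).
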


\begin{proof}

Let $Q\in Spec(S)\setminus V(Ann((I^{[p]}+J_{p})/I^{[p]}))$ be a prime
ideal. Let $E_{\widehat{S_{Q}}}$ denote the injective hull of the
residue field of $\widehat{S_{Q}}$, where $\widehat{S_{Q}}$ is the
completion of  $S_{Q}$ with respect to its maximal ideal $QS_{Q}$.
A basic theoretical result, see \cite{bruns}, says
that $E_{\widehat{S_{Q}}}\cong E_{S_{Q}}$. Let us define
$M:=(I^{[p]}+J_{p})/I^{[p]}$. Note that $V(Ann(M))=supp(M)$ (see for example \cite[lemma 00L2]{stacksproject}), then $Q\notin supp(M)$, then
$0=M_{Q}=((I^{[p]}+J_{p})/I^{[p]})\otimes_{S} S_{Q}$. 

Therefore by (\ref{descripcion colon ideal en sfree}),
$((I^{[p]}:I)/I^{[p]})\otimes_{S} S_{Q}\cong
(x^{\beta})^{p-1}_{Q}$. So, we can write

\begin{equation*}
\begin{aligned}
\mathcal{F}(E_{S_{Q}})  \cong ~~~~
\mathcal{F}(E_{\widehat{S_{Q}}}) \cong \underset{e\geq 0}{
\bigoplus}
\frac{(I^{[p^{e}]}\widehat{S_{Q}}:I\widehat{S_{Q}})}{I^{[p^{e}]}\widehat{S_{Q}}}f^{e}
&\cong & \underset{e\geq 0}{\bigoplus}
\frac{(I^{[p^{e}]}:I)}{I^{[p^{e}]}}f^{e} \otimes_{S}
\widehat{S_{Q}}\\
 \cong \underset{e\geq 0}{\bigoplus}
(x^{\beta})_Q^{p^{e}-1} f^{e}\cong S_Q[x^{\beta(p-1)}\theta,f],
\end{aligned}
\end{equation*}
which is finitely generated, by Lemma \ref{generador principal}.

\end{proof}

The contention of $U'$ in $U$ in the former theorem can be strict as we will see with the following examples. Before proving this, let us state an elementary lemma of central importance in our discussion.

\begin{proposition}\label{j}

Let $k$ be a field of characteristic $p>0$, $R=k[x_1,\cdots,x_n]$, and let $I$ be a face ideal in $R$. Then 

\[(I^{[p]}:I)=I^{[p]}+J_{p}+((x^{\beta})^{p-1}),\]
where $x^{\beta}=\prod_{i=1}^nx_i^{\beta_i}$, $\beta=(\beta_1,\cdots,\beta_n)\in \{0,1\}^n$, and either $J_p=(0)$ or $J_p=(m_1,\cdots,m_r)$ and for any $\mu\in \{1,\cdots,r\},$ $m_{\mu}=\prod_{j=1}^nx_j^{d_{(\mu),j}}$ and $d_{(\mu),j}$ is equal to either $p$, $p-1$ or $0$. Furthermore, in the second case for each $m_{\mu}$ there exists $j_{\mu}$ and $i_{\mu}$ such that $d_{(\mu),i_{\mu}}=p$ and $d_{(\mu),j_{\mu}}=p-1$.

Moreover, for any $e>0$

\[(I^{[p^e]}:I)=I^{[p^e]}+J_p^{(e)}+((x^{\beta})^{p^e-1});\]
where $J_p^{(e)}=(m_1^{(e)},\cdots,m_r^{(e)})$, and for any $\mu\in\{1,\cdots,r\},$ $m_{\mu}^{(e)}=\prod_{j=1}^nx_j^{{d^{(e)}_{(\mu),j}}}$ and $d_{(\mu),j}^{(e)}$ is equal to either $p^e$ (when $d_{(\mu),j}=p$), $p^e-1$ (when $d_{(\mu),j}=p-1$) or $0$ (when $d_{(\mu),j}=0$).

\end{proposition}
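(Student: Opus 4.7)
The plan is to compute $(I^{[p]} :_R I)$ explicitly by exploiting the squarefree monomial structure of $I$. First, I would write $I = I_{\alpha_1} \cap \cdots \cap I_{\alpha_s}$ as the minimal primary decomposition into face ideals and observe that, because each $I_{\alpha_j}$ is generated by a subset of variables, $I^{[p]} = I_{\alpha_1}^{[p]} \cap \cdots \cap I_{\alpha_s}^{[p]}$. Since $I$ and $I^{[p]}$ are monomial ideals, the colon $(I^{[p]}:I)$ is itself monomial, and the formula $(I^{[p]}:I) = \bigcap_{\mu \in G(I)} (I^{[p]} : \mu)$ reduces the problem to deciding, for each monomial $u = x_1^{c_1}\cdots x_n^{c_n}$, whether $u\mu \in I^{[p]}$ for every squarefree generator $\mu$ of $I$. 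Unpacking, this says: for every $\mu \in G(I)$ and every primary component index $j$, there must exist $i$ in the support of $\alpha_j$ such that $c_i + (\mu)_i \geq p$.

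Next, I would carry out a case analysis on the exponent tuple $(c_1,\ldots,c_n)$. The key observation is that any exponent $c_i \in \{1,\ldots,p-2\}$ is redundant: either it already forces $u \in I^{[p]}$ (and thus contributes nothing new modulo $I^{[p]}$), or it violates the colon condition for some generator $\mu$. Hence the essentially new generators of $(I^{[p]}:I)$ have all exponents in $\{0, p-1, p\}$. Among such monomials, those with every exponent in $\{0, p-1\}$ lie in the principal ideal $((x^\beta)^{p-1})$, where $x^\beta$ records exactly the variables appearing in some $I_{\alpha_j}$, i.e., the variables needed to cover every primary component. The remaining essential monomials must have at least one exponent equal to $p$ (to cover some $\alpha_j$ independently of $\mu$) and at least one equal to $p-1$ (to produce the required $p$-th power only after multiplication by the appropriate $\mu$); these are precisely the generators of $J_p$. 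The case $J_p = 0$ occurs when no mixed pattern survives.

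For the statement about arbitrary $e > 0$, the same argument applies verbatim after substituting $p^e$ for $p$ throughout: we still have $I^{[p^e]} = I_{\alpha_1}^{[p^e]} \cap \cdots \cap I_{\alpha_s}^{[p^e]}$, and the combinatorial description of membership in $I_{\alpha_j}^{[p^e]}$ only changes the threshold exponent. The permissible exponents for new generators then lie in $\{0, p^e - 1, p^e\}$, and the incidence pattern among the $\alpha_j$'s is unchanged, so one obtains the exponent bijection $0 \mapsto 0,\ p-1 \mapsto p^e - 1,\ p \mapsto p^e$ that carries the generating set of $J_p$ onto that of $J_p^{(e)}$ and $(x^\beta)^{p-1}$ onto $(x^\beta)^{p^e - 1}$, as claimed.

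The main obstacle is the exponent case analysis in the second step: one must rigorously exclude all exponent patterns outside $\{0, p-1, p\}$ and, within this set, justify the "at least one $p$ and at least one $p-1$" condition for the generators of $J_p$. This requires careful bookkeeping of the incidence between variables and the supports $A_j$ of the primary components $I_{\alpha_j}$, using the fact that a squarefree monomial generator of $I$ must hit each $A_j$ in at least one index. Everything else is formal manipulation with monomial ideals.
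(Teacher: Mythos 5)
Your proposal is correct and follows essentially the same route as the paper's own proof: both reduce $(I^{[p^e]}:I)$ to a combinatorial membership test for monomials, observe that the threshold on each exponent of a minimal generator lies in $\{0,p^e-1,p^e\}$, sort the resulting generators into $I^{[p^e]}$, $((x^{\beta})^{p^e-1})$ and $J_p^{(e)}$, and note that the incidence pattern of the conditions is independent of $e$; the only organizational difference is that you test membership in $I^{[p^e]}$ via the primary decomposition $\bigcap_j I_{\alpha_j}^{[p^e]}$ together with $(I^{[p^e]}:I)=\bigcap_{\mu}(I^{[p^e]}:\mu)$, whereas the paper works directly with the generators $(x^{\gamma_j})^{p^e}$ of $I^{[p^e]}$. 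One small imprecision: your dichotomy for an exponent $c_i\in\{1,\dots,p-2\}$ is not quite right, since such a monomial can lie in the colon ideal without lying in $I^{[p]}$ and without violating any condition --- the correct statement is that it is then a nonminimal multiple of a generator whose $i$-th exponent is $0$, which follows from your own observation that each exponent of a minimal generator is a maximum of thresholds taken from $\{0,p-1,p\}$.
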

\begin{proof}
Let us write $q=p^e$ for any $e>0$. First note that since $char(k)=p>0$ then if $I=(x^{\gamma_1},\cdots,x^{\gamma_z})$, for some squarefree monomials $x^{\gamma_l}$, then  $I^{[q]}=((x^{\gamma_1})^{q},\cdots,(x^{\gamma_z})^{q})$.
So, for computing explicilty all the generators of $(I^{[q]}:I)$ we set a generic monomial $x^c=\prod_{\delta=1}^nx_\delta^{c_\delta}$, where conditions for the natural numbers $c_i$ need to be defined. Explicitly, we set the conditions 
\begin{equation}
  x^cx^{\gamma_i}\in((x^{\gamma_j})^q).
  \label{monomial-conditions}
\end{equation}
for $i,j\in \{1,\cdots,z\}$. Note that due to the fact that both ideals $I$ and $I^{[q]}$ are generated by monomials, then one can check that $(I^{[q]}:I)$ is also generated by monomials . Therefore, it is enough to check the conditions given in (\ref{monomial-conditions}) for identifying such generating polynomials. 
Now, for each fixed $i$ and $j$, we obtain a specific condition on the monomial $x^cx^{\gamma_i}$ to belong to $I^{[q]}$. More specifically, this condition consists on the conjunction of $z$ requirements of the form $c_\delta\geq\kappa_{(i,j)}(\delta)$, where $\kappa_{(i,j)}(\delta)$ is equal to either $q$, $q-1$ or zero. Moreover, a monomial $x^c$ belongs to $(I^{[q]}:I)$ if and only if for each $i\in\{1,\cdots,z\}$, for at least one $j\in\{1,\cdots,z\}$, the condition in (\ref{monomial-conditions}) holds, i.e., the corresponding $z$ inequalities $c_{\delta}\geq\kappa_{(i.j)}(\delta)$ hold. So, for each fixed $i$ we can choose among $z$ possibilities to place the monomial $x^cx^{\gamma_i}$ into $I^{[q]}$, namely, with each generator of $I^{[q]}$. Thus, in general a monomial $x^c$ belongs to $(I^{[q]}:I)$ if and only if we choose for each $i$ a specific condition as in (\ref{monomial-conditions}) and we compute the whole conjunction ($\wedge$) of all the resulting inequalities. 

Note that when computing and simplifying these conjunctions, one find either identical conditions for a specific variable, or different conditions, like, for example, $c_1\geq 0 \wedge c_1\geq q\wedge c_1 \geq q-1$, which is  equivalent to $c_1\geq max(\{0,q-1,q\})= q$. So, one obtains the optimal monomial by taking the whole collection of equalities in the $c_{\delta}$. Now, with this particular $c$ one construct a generator of $(I^{[q]}:I)$. In fact, all the (monomial) generators of $(I^{[q]}:I)$ are generated in this way. Note, that there are $z^2$ possibilities to check (one by each generator of $I$ and each generator of $I^{[q]}$). When one compute a particular case for an explicit $I$, one can obtain the same generator repeated several times. 

The following step after obtaining the $z^2$ generator of $(I^{[q]}:I)$ is to eliminate the redundant ones. Next, we group these monomials in three groups. The first one is the sub-collection of monomials generating $I^{[q]}$. The second one is the monomial generated by $(x^{\beta})^{q-1}$, where $\beta_w=1$ if and only if the variable $x_w$ appears in some generator of $I$. And, the third ideal generated by the remaining monomials will be denoted by $J_q^{e}$. 

Finally, due to the fact that the collection of conjunctions giving rise to the monomials of $(I^{[q]}:I)$ are structurally independent of the value of $q$ and due to the form of these conditions we deduce both statements of our proposition.
\end{proof}
The former proposition is a more explicit variation of one of the results obtained in \cite[\S 3.1]{montaner}.
\begin{proposition}

Let $k$ be a field of characteristic $p>0$, $R=k[[x_1,x_2,x_3]]$, and let $I=(x_1x_2,x_2x_3)$.
Then $U'\subsetneq U$.
\end{proposition}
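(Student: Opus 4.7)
The plan is to exhibit a prime $Q$ of $S=R/I$ that lies in $U$ but not in $U'$. First I would compute $(I^{[p]}:_R I)$ for $I=(x_1x_2,x_2x_3)$ using Proposition~\ref{j}: running through the four combinations of forcing $x^c\cdot x_1x_2$ and $x^c\cdot x_2x_3$ each into one of the two generators of $I^{[p]}=(x_1^p x_2^p,\,x_2^p x_3^p)$, one obtains
\[
(I^{[p]}:_R I) \;=\; I^{[p]} \;+\; \bigl((x_1 x_2 x_3)^{p-1}\bigr) \;+\; (x_1^p x_2^{p-1},\; x_2^{p-1} x_3^p),
\]
so here $x^\beta = x_1 x_2 x_3$ and $J_p = (x_1^p x_2^{p-1},\; x_2^{p-1} x_3^p)\neq 0$.

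Next, set $M := (I^{[p]}+J_p)/I^{[p]}$. Since $I^{[p]}$ annihilates $M$ automatically, $\operatorname{Ann}_R M = (I^{[p]}:_R J_p) = (I^{[p]}:_R x_1^p x_2^{p-1}) \cap (I^{[p]}:_R x_2^{p-1} x_3^p)$. A direct monomial colon computation shows that each of these two colons equals $(x_2)$, so $\operatorname{Ann}_S M = \bar x_2\,S$ and, by Theorem~\ref{teorema5}, $U' = \operatorname{Spec}(S)\setminus V(\bar x_2)$.

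The last step is to exhibit a prime in $V(\bar x_2)$ that still lies in $U$. I would take the minimal prime $Q = (x_2)/I$ of $S$ (corresponding to the face component $(x_2)$ in the primary decomposition $I=(x_2)\cap(x_1,x_3)$); since $\bar x_2\in Q$, clearly $Q\in V(\bar x_2)$ and hence $Q\notin U'$. To see $Q\in U$, observe that $x_1$ and $x_3$ are units in $R_{(x_2)}$, so $I R_{(x_2)} = (x_2) R_{(x_2)}$ and
\[
S_Q \;=\; R_{(x_2)}/(x_2) R_{(x_2)} \;\cong\; \operatorname{Frac}\bigl(k[[x_1,x_3]]\bigr)
\]
is a field $L$. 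Then $E_{S_Q}=L$, and any $r^{p^e}$-linear endomorphism $\varphi:L\to L$ satisfies $\varphi(r)=\varphi(1)\,r^{p^e}$ and is therefore determined by $\varphi(1)$; hence $\mathcal{F}(E_{S_Q}) \cong \bigoplus_{e\geq 0} L\cdot f^e$, which is generated as an $L$-algebra by the single element~$f$. Thus $Q\in U\setminus U'$, giving the desired strict containment.

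The only substantive computational step is the determination of the colon ideals yielding $\operatorname{Ann}_R M=(x_2)$; after that, the collapse of $S_Q$ to a field at the chosen minimal prime renders the finite generation of $\mathcal{F}(E_{S_Q})$ essentially immediate, bypassing any need to invoke Theorem~\ref{teoremamontaner} through a completion argument.
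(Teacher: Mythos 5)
Your computation of $(I^{[p]}:_RI)=I^{[p]}+(x_1^px_2^{p-1},x_2^{p-1}x_3^p)+((x_1x_2x_3)^{p-1})$ and of $\operatorname{Ann}_S M=\bar x_2 S$, hence $U'=D(\bar x_2)$, agrees with the paper, and your witness prime $Q=(x_2)/I$ is exactly the one the paper uses to conclude $U'\subsetneq U$. Where you genuinely diverge is in certifying $Q\in U$: the paper does this by checking that the image of $J_p^{(e)}$ dies in the localization and then rerunning the argument of Theorem \ref{teorema5} to identify $\mathcal{F}(E_{S_Q})$ with a skew polynomial ring $S_Q[x^{\beta(p-1)}\theta,f]$ (and it goes on to describe all of $U$ explicitly, which the proposition does not actually require). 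You instead observe that $(x_2)$ is a minimal prime of $I$, so $S_Q$ collapses to the field $L=\operatorname{Frac}(k[[x_1,x_3]])$, whence $E_{S_Q}=L$ and $\mathcal{F}(E_{S_Q})=\bigoplus_e L f^e$ is visibly generated by $f$ over $L$. This is a clean shortcut that avoids the completion and the colon-ideal machinery entirely; its only cost is that it says nothing about the other primes of $V(\bar x_2)$, so it establishes the strict containment but not the paper's full (and stronger) identification of $U$ as an open set. Both arguments are correct; yours is the more economical proof of the statement as literally posed.
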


\begin{proof}

Following the method in the former proposition, we can check that for each $e>0$, 
\begin{equation}
    J_p^{(e)}=(x_1^{p^e}x_2^{p^e-1},x_2^{p^e-1}x_3^{p^e}).
\end{equation}

Again, using the same strategy of the proof of the former proposition, one can compute explicitly the generators of $Ann(M)$, where $M=(I^{[p]}+J_p)/I^{[p]}$. So, one verifies that $Supp(M)=V(Ann(M))=V((x_2))\cap V(I)$.  Therefore $U'=Spec(S)\setminus (V((x_2)) \cap V(I))=D(x_2)\cap V(I)$. 
On the other hand, one can see that 

$U=(D(x_2)\cup D(x_1x_3) \cup (V((x_1,x_2))\cap D(x_3)) \cup (V((x_2,x_3))\cap D(x_1)))\cap V(I).$

One verify this by localizing at suitable primes belonging to each of the corresponding subsets of $Spec(R)$ (the same applies to the complement of this set) and by checking that each time that the reduction of $J_p^{(e)}$ is contained (or is not) in the reduction of $(I^{[p^e]}+((x^{\beta})^{p^e-1})$. So, in each case one can mimic the argument given in theorem \ref{teorema5}, to show that $\mathcal{F}(E_{S_Q})$ is the finitely generated $S_Q$-algebra $S_Q[x^{\beta(p-1)}\theta,f]$.
Thus, one can immediately verify that $U'\subsetneq U$ in $Spec(S)$, because $(x_2)\in U$ but $(x_2)\notin U'$.

Now, it is a straightforward verification to see that 

\begin{equation}
    U=(D(x_1)\cup D(x_2) \cup D(x_3))\cap V(I).
\end{equation}
In fact, $Spec(S)\setminus U=V((x_1,x_2,x_3))=\{(x_1,x_2,x_3)\}$, which is the maximal ideal of $S$. So, $S_{(x_1,x_2,x_3)}\cong S$. Thus, $\mathcal{F}(F_{S_{(x_1,x_2,x_3)}})$ is infinitely generated by Theorem (\ref{teoremamontaner}).
So, $U$ is an open subset of $Spec(S)$.
\end{proof}

Let us denote $R'=k[[x_1,\cdots,x_n]]$
For any $d=(d_1,\cdots,d_n)\in \{0,1\}^n$, let us define its complement as $d^+=(d_1^+,\cdots,d_n^+)$
where $d_i^+=1$ if and only if $d_i=0$. Moreover, define the support of a $d\in \{0,1\}^n$ as $suppo(d)=\{x_i:d_i=1\}$, and for $d\neq (0)^n$, $x^d=\prod_{x_i\in suppo(d)}x_i$; and for $d=(0)^n$, $x^d=1$.
Also, let us define

\[G_d=D(x^d)\cap V((x_j:x_j\in suppo(d^+)))\subseteq Spec(R').
\]

Note that the collection $\{G_d\}_{d\in \{0,1\}^n}$ forms a partition of $Spec(R')$ into subsets which are intersections of open and closed sets. 

Let $\phi_d:R=k[x_1,\cdots,x_n] \rightarrow R_d=k[x_j:x_j\in suppo(d)]$ be the $k-$linear homomorphism sending $x_j$ to either $x_j$ if $x_j\in suppo(d)$, or to $1$ if $x_j\in suppo(d^+)$.
If $H$ is an ideal of $R$, then the image of $H$ under $\phi_d$ is denoted by $H_d$ and the image of a polynomial $g\in R$ is denoted similarly as $g_d$.

\begin{theorem}\label{direct}

Let $k$ be a field of characteristic $p>0$ and $I\subseteq R=k[x_1,\cdots,x_n]$ be a face ideal, and $S=R/I$. Let

\[(I^{[p]}:I)=I^{[p]}+J_p+((x^{\beta})^{p-1});\]

where $J_p=(m_1,\cdots,m_r)$, as in Proposition (\ref{j}).
Suppose that there exists a $d\in \{0,1\}^n$ such that

\begin{equation}
(I^{[p]}:_RI)_d=(I_d^{[p]}:_{R_d}I_d)=I_d^{[p]}+(J_p)_d+((x^{\beta}_d)^{p-1})=I_d^{[p]}+((x^{\beta'})_d^{p-1}),  
\end{equation}
where $\beta,\beta'\in \{0,1\}^n.$
Then the open set $D(x^{d^+})\subseteq Spec(\widehat{S})$ is contained in 

\[U=\{Q\in \text{Spec}(\widehat{S})~:~\mathcal{F}(E_{\widehat{S_{Q}}})~\text{is a
finitely generated}~\widehat{S_{Q}}\text{-algebra}\}.\]
\end{theorem}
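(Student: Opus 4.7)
The plan is to show, for every $Q \in D(x^{d^+})$, that $\mathcal{F}(E_{\widehat{S_Q}})$ is principally generated as a $\widehat{S_Q}$-algebra with generator $(x^{\beta'})^{p-1}\theta$. The key identity to establish is
\[
(I^{[p^e]} : I)\, \widehat{R_Q} \;=\; I^{[p^e]}\, \widehat{R_Q} \;+\; ((x^{\beta'})^{p^e-1})\, \widehat{R_Q}
\]
for every $e \geq 0$; once this is available, Lemma \ref{generador principal} delivers finite generation. The hypothesis provides information only at $e=1$, so the real work is to bootstrap this to all $e$ and then to transfer from $R_d$ to the localization $\widehat{R_Q}$.

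Following the reduction chain used in the proof of Theorem \ref{teorema5}, I would begin from
\[
\mathcal{F}(E_{\widehat{S_Q}}) \;\cong\; \bigoplus_{e \geq 0} \frac{(I^{[p^e]}:I)}{I^{[p^e]}} \otimes_S \widehat{S_Q} \cdot f^e,
\]
so the problem reduces to identifying the extension of $(I^{[p^e]}:I)$ to $\widehat{R_Q}$ for each $e$. The core step is the upgrade from $e=1$ to all $e \geq 0$ at the level of $\phi_d$-images. By Proposition \ref{j}, each generator $m_\mu^{(e)}$ of $J_p^{(e)}$ has exponents in $\{0,p^e-1,p^e\}$ obtained from those of $m_\mu \in J_p$ (lying in $\{0,p-1,p\}$) by the rigid replacement $p \mapsto p^e$, $p-1 \mapsto p^e-1$, $0 \mapsto 0$. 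Since monomial-ideal membership is detected by monomial divisibility, the hypothesis forces $\phi_d(m_\mu)$ to be divisible either by some $(x^{\gamma_l})_d^p$ or by $(x^{\beta'})_d^{p-1}$. In the first case the exponents of $m_\mu$ on $\operatorname{suppo}((x^{\gamma_l})_d)$ must all equal $p$; scaling to $p^e$ gives $(x^{\gamma_l})_d^{p^e} \mid \phi_d(m_\mu^{(e)})$. In the second they lie in $\{p-1,p\}$, scaling to $\{p^e-1,p^e\}$ which are both $\geq p^e-1$, giving $(x^{\beta'})_d^{p^e-1} \mid \phi_d(m_\mu^{(e)})$. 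An analogous case analysis applied to $((x^\beta)^{p-1})_d$ shows $((x^\beta)^{p^e-1})_d \in ((x^{\beta'})_d^{p^e-1})$. Combined with Proposition \ref{j} this yields $(I^{[p^e]}:I)_d \subseteq I_d^{[p^e]} + ((x^{\beta'})_d^{p^e-1})$. The reverse containment reduces, generator by generator of $I$, to inequalities of the form $(p^e-1)a + b \geq p^e c$ with $a,b,c \in \{0,1\}$, whose solution set is independent of $e$, so it follows from the $e=1$ case given in the hypothesis.

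Finally, I transfer this $R_d$-level identity back to $\widehat{R_Q}$. Because $Q \in D(x^{d^+})$, every $x_j$ with $j \in \operatorname{suppo}(d^+)$ is a unit in $\widehat{R_Q}$, so for any monomial ideal $L \subseteq R$ the extension $L\,\widehat{R_Q}$ is generated by $\phi_d(L)$ viewed in $\widehat{R_Q}$ (the non-$d$ factors of the monomial generators are units). Applying this with $L = (I^{[p^e]} : I)$ produces the desired identity for every $e$, and Lemma \ref{generador principal} then identifies $\mathcal{F}(E_{\widehat{S_Q}})$ with the finitely generated $\widehat{S_Q}$-algebra $\widehat{S_Q}[(x^{\beta'})^{p-1}\theta, f]$, so $Q \in U$.

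The main obstacle is the $e=1$-to-all-$e$ upgrade. Although it looks like it ought to be formal, it genuinely depends on the lockstep exponent scaling $\{0,p-1,p\} \to \{0,p^e-1,p^e\}$ supplied by Proposition \ref{j}, together with the elementary but delicate observation that the inequalities $(p^e-1)a + b \geq p^e c$ with $a,b,c \in \{0,1\}$ have solutions independent of $e$. A harmless degenerate case where some $\phi_d(x^{\gamma_l}) = 1$ (forcing $I_d = R_d$) is excluded by the very form of the hypothesis.
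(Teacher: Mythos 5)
Your proposal is correct and follows essentially the same route as the paper's proof: localize at $Q\in D(x^{d^+})$ so the variables in $\operatorname{suppo}(d^+)$ become units, invoke the hypothesis at $e=1$, extend the identity $(I^{[p^e]}:I)\widehat{S_Q}=(I^{[p^e]}+((x^{\beta'})^{p^e-1}))\widehat{S_Q}$ to all $e$ via Proposition \ref{j}, and conclude with the argument of Theorem \ref{teorema5} and Lemma \ref{generador principal}. The only difference is that you spell out the $e=1$-to-all-$e$ bootstrap (the divisibility case analysis and the $e$-independence of the exponent inequalities), which the paper dispatches in a single sentence.
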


\begin{proof}
Let $Q\in D(x^{d^+})$. Due to the fact that all the variables $x_j$ in $x^{d^+}$ are unities in $S_Q$, from the hypothesis we see that

\[(I^{[p]}:I)\otimes \widehat{S_Q}=(I^{[p]}+((x^{\beta'})^{p-1}))\otimes \widehat{S_Q}.\]
Now, by proposition (\ref{j}), we can extend the former fact to any $e>0$ as follows

\[(I^{[p^e]}:I)\otimes \widehat{S_Q}=(I^{[p^e]}+((x^{\beta'})^{p^e-1}))\otimes \widehat{S_Q}.\]

So, doing a reasoning essentially identical as the proof of theorem (\ref{teorema5}) we verify that 

\[\mathcal{F}(E_{\widehat{S_Q}})\cong \widehat{S_Q}[x_d^{\beta'(p-1)}\theta,f].
\]

In conclusion, $Q\in U$.

\end{proof}

Now, we prove a kind of dual version of the former theorem giving a sufficient condition for a set of the form $G_d$ not to belong to $U$.

\begin{theorem}\label{complement}

Let $k$ be a field of characteristic $p>0$ and $I\subseteq R=k[x_1,\cdots,x_n]$ be a face ideal, and $S=R/I$. Let

\[(I^{[p]}:I)=I^{[p]}+J_p+((x^{\beta})^{p-1});\]

where $J_p=(m_1,\cdots,m_r)$, as in prop. (\ref{j}).

Suppose that there exists a $d\in \{0,1\}^n$ such that for the prime ideal $Q_{d^+}=(x_j:x_j\in suppo(d^+))\in G_d$ it holds that there exists a monomial generator of $J_p \otimes \widehat{S_{Q_{d^+}}}$ of the form $x^{\sigma}$, where there exists $\sigma_a=0,\sigma_b=p-1$ and $\sigma_c=p$, for some indexes $a,b,c\in\{1,\cdots,n\}$. Assume that $x^{\sigma}\notin (I^{[p]}+(x^{\beta(p-1)}))\otimes \widehat{S_{Q_{d^+}}}$, then 

\[G_d\cap V(I\widehat{S})\subseteq U^c.
\]
\end{theorem}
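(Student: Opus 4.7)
Fix an arbitrary $Q\in G_d\cap V(I\widehat{S})$; the goal is to show that $\mathcal{F}(E_{\widehat{S_Q}})$ fails to be a finitely generated $\widehat{S_Q}$-algebra. Because $Q\supseteq Q_{d^+}$, the variables $x_j$ with $j\in suppo(d)$ are units of $\widehat{S_Q}$ and those with $j\in suppo(d^+)$ lie in its maximal ideal; the same is true verbatim in $\widehat{S_{Q_{d^+}}}$. Since every ideal appearing in the decomposition $(I^{[p^e]}:I)=I^{[p^e]}+J_p^{(e)}+(x^{\beta(p^e-1)})$ of Proposition~\ref{j} is monomial, the question of whether a given monomial belongs to the extension of such an ideal to either completion depends only on the restriction of its exponent vector to $suppo(d^+)$. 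As this restriction coincides for $\widehat{S_Q}$ and $\widehat{S_{Q_{d^+}}}$, the assumed non-membership $x^{\sigma}\notin (I^{[p]}+(x^{\beta(p-1)}))\otimes\widehat{S_{Q_{d^+}}}$ transfers directly to $x^{\sigma}\notin (I^{[p]}+(x^{\beta(p-1)}))\otimes\widehat{S_Q}$.

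Next one upgrades this non-membership to every Frobenius level. By Proposition~\ref{j}, the scaling $0\mapsto 0$, $p-1\mapsto p^e-1$, $p\mapsto p^e$ sends $\sigma$ to the exponent $\sigma^{(e)}$ of a genuine monomial generator $x^{\sigma^{(e)}}$ of $J_p^{(e)}$ with $\sigma^{(e)}_a=0$, $\sigma^{(e)}_b=p^e-1$, $\sigma^{(e)}_c=p^e$; under the analogous rule, the generators $p\gamma$ of $I^{[p]}$ become $p^e\gamma$ and the exponent $\beta(p-1)$ becomes $\beta(p^e-1)$. Since $\sigma$ takes values in $\{0,p-1,p\}$ and $\gamma,\beta$ take values in $\{0,1\}$, the coordinatewise domination $\sigma_j\geq p\gamma_j$ is equivalent to $\sigma^{(e)}_j\geq p^e\gamma_j$, and similarly for $\beta(p-1)$ versus $\beta(p^e-1)$. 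Hence the simultaneous failure of both dominations at level $1$ persists at every level $e$, yielding $x^{\sigma^{(e)}}\notin (I^{[p^e]}+(x^{\beta(p^e-1)}))\otimes\widehat{S_Q}$ for every $e\geq 1$.

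The class $x^{\sigma^{(e)}}f^e$ is therefore, for every $e$, a non-trivial contribution from $J_p^{(e)}$ that cannot be absorbed into $I^{[p^e]}+(x^{\beta(p^e-1)})$ inside $\widehat{S_Q}$. Adapting the infinite-generation argument of \cite[Theorem~3.5(2)]{montaner}: if $\mathcal{F}(E_{\widehat{S_Q}})$ were generated by finitely many $u_1 f^{e_1},\ldots,u_t f^{e_t}$ with $N=\max_i e_i$, then for $e>N$ the element $x^{\sigma^{(e)}}f^e$ would be an $\widehat{S_Q}$-linear combination of products $u_{i_1}u_{i_2}^{p^{e_{i_1}}}\cdots u_{i_s}^{p^{e_{i_1}+\cdots+e_{i_{s-1}}}}f^{e_{i_1}+\cdots+e_{i_s}}$. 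At position $b$, the exponent of any such product is a bounded-digit $p$-adic combination, and simultaneously matching the very rigid pattern $p^e-1$ at $b$, $p^e$ at $c$, and $0$ at $a$ for arbitrarily large $e$ is impossible with a fixed finite family of $u_i$. Hence $\mathcal{F}(E_{\widehat{S_Q}})$ is infinitely generated, so $Q\in U^c$; since $Q$ was arbitrary, $G_d\cap V(I\widehat{S})\subseteq U^c$. The chief obstacle is this last $p$-adic combinatorial verification, which is also the crux of the corresponding argument in \cite{montaner}.
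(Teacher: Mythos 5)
Your first two steps track the paper's proof closely: you transfer the non-membership of $x^{\sigma}$ from the localization at $Q_{d^+}$ to the localization at an arbitrary $Q\in G_d\cap V(I\widehat{S})$ (the paper does this by observing that $\widehat{S_{Q_{d^+}}}$ is a further localization of $\widehat{S_Q}$, which is slightly cleaner than your monomial-support argument but amounts to the same thing), and you then propagate the non-membership to every level $e$ via the exponent scaling of Proposition~\ref{j}. Both of these are fine.

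The gap is in your final step. Where the paper invokes Katzman's criterion --- namely that $\mathcal{F}_{<e}\cap\mathcal{F}^e(E_{\widehat{S_Q}})=L_e$ with $L_e=\sum F_{e_1}F_{e_2}^{[p^{e_1}]}\cdots F_{e_s}^{[p^{e_1}+\cdots+e_{s-1}]}$, and then checks that $(x^{\sigma})^{(e)}\in F_e$ but $(x^{\sigma})^{(e)}\notin L_e$ --- you assert that the exponent pattern $(0,\,p^e-1,\,p^e)$ at positions $(a,b,c)$ cannot be realized as a $p$-adically weighted product of finitely many fixed generators. As a pure digit-combinatorics statement this is false: taking $s=2$ with $e_1+e_2=e$ and factor exponent vectors $\kappa^{(1)}=(0,\,p^{e_1}-1,\,0)$ and $\kappa^{(2)}=(0,\,p^{e_2}-1,\,p^{e_2})$ at $(a,b,c)$, the product exponent $\kappa^{(1)}+p^{e_1}\kappa^{(2)}$ equals $(0,\,p^e-1,\,p^e)$ exactly. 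So the obstruction cannot come from the shape of the exponents alone; it must come from the fact that such candidate factors are \emph{not actually elements of} $F_{e_1}=(I^{[p^{e_1}]}:I)$ and $F_{e_2}=(I^{[p^{e_2}]}:I)$, which is precisely where the hypothesis $x^{\sigma}\notin(I^{[p]}+(x^{\beta(p-1)}))\otimes\widehat{S_{Q_{d^+}}}$ and the case analysis of the generators of $J_{p^{e_i}}$, $I^{[p^{e_i}]}$ and $(x^{\beta})^{p^{e_i}-1}$ from \cite{Katzman} and \cite[\S 3]{montaner} enter. Your proof as written replaces this ideal-theoretic verification with an incorrect combinatorial claim, so the crucial conclusion $(x^{\sigma})^{(e)}\notin L_e$ is not established. (You also implicitly need $(x^{\sigma})^{(e)}\in F_e$, which should be stated; it follows from Proposition~\ref{j}.)
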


\begin{proof}
 Let $Q$ be a prime ideal in $G_d\cap V(I\widehat{S})$. Note that by definition $Q_{d^+}$ is the minimal prime of $G_d\cap V(I\widehat{S})$, so $Q_{d^+}\subseteq Q$. Moreover when $J_p$ is not containing in $I^{[p]}+(x^{\beta(p-1)})$, we can assume without loss of generality that there exists a monomial generator $x^{\gamma}$ in $J_p$ with the configuration of exponents given in the initial condition of our theorem (see for example \cite[\S 3.1]{montaner}). So, the hypothesis is a natural fact that can be checked (resp. refused) simply by verifying that the classes of the original generators of $J_p$ in the localization belong (or not) to $(I^{[p]}+(x^{\beta(p-1)}))\otimes S_{Q_{d^+}}$.

Now, due to the fact that by localizing at $Q_{d^+}$, $x^{\sigma}\notin (I^{[p]}+(x^{\beta(p-1)}))\otimes \widehat{S_{Q_{d^+}}}$, then by localizing at $Q$, 
$x^{\sigma}\notin (I^{[p]}+(x^{\beta(p-1)}))\otimes \widehat{S_{Q}}$. This holds because in the last localization we are inverting less elements that in the first one. So, if $x^{\sigma}\in (I^{[p]}+(x^{\beta(p-1)}))\otimes \widehat{S_Q}$, then localizing again (at $Q_{d^+}$), we would obtain $x^{\sigma}\in (I^{[p]}+(x^{\beta(p-1)}))\otimes \widehat{S_{Q_{d^+}}}$.
Furthermore, by a similar proof like the one given in proposition (\ref{j}) and following the notation there, we can extend the former fact to any $e>0$ as $(x^{\sigma})^{(e)}\notin (I^{[p^e]}+(x^{\beta(p^e-1)}))\otimes \widehat{S_{Q}}$.

Due to the fact that $R_Q$ is a complete local ring containing a field of characteristic $p>0$  and that $S_Q\cong R_Q/(IS_Q)$, we can apply Katzman's criterion \cite{Katzman} exactly in the same way of \cite[Prop.3.5]{montaner} to obtain the following result: 

For $e>0$, let $F_e=(I\widehat{S_Q}^{[p^e]}:_{\widehat{S_Q}}I\widehat{S_Q})$ and define \[L_{e}=\underset{~}{\sum }%
F_{e_{1}}F_{e_{2}}^{[p^{e_{1}}]}\cdots F_{e_{s}}^{[p^{e_{1}+\cdots
+e_{s-1}}]},\] with $1\leq e_{1},\ldots ,e_{s}<e$ and $e_{1}+\cdots +e_{s}=e$
, and let us define $\mathcal{F}_{<e}$ as the subalgebra of $\mathcal{F}(E_{\widehat{S_Q}})$ generated by $%
\mathcal{F}^{0}(E_{\widehat{S_Q}}),\ldots ,\mathcal{F}^{e-1}(E_{\widehat{S_Q}})$. Then $\mathcal{F}_{<e}\cap \mathcal{F}^e(E_{\widehat{S_Q}})=L_e$.
Moreover, we verify exactly like in \cite{Katzman} that for any $e>0$, $(x^{\sigma})^{(e)}\in F_e$ but $(x^{\sigma})^{(e)}\notin L_e$. So, $Q\notin U$.
In conclusion, $G_d\cap V(I\widehat{S})\subseteq U^c$.
\end{proof}

One can use the former two theorems as initial tools for computing explicitly the finitely generated locus of specific Frobenius algebras emerging from simple, but not trivial face ideals. We explore this usage in the following example

\begin{example}
Let $k$ be a field of characteristic $p>0$, $R=k[[x_1\cdots,x_4]]$, and let $I=(x_1x_2x_3,x_3x_4)$.
\end{example}

As before, we can check that for all $q=p^e$, with $e>0$.

\begin{equation}
(I^{[q]}:I)=I{[q]}+J_p^{
(e)}+((x_1x_2x_3x_4)^{q-1})
\end{equation}

where $J_p^{(e)}=(x_1^qx_2^qx_3^{q-1},x_3^{q-1}x_4^q)$.

Moreover, by dividing the topological space $Spec(R)$ into subsets of the form $D(x_{i_1}\cdots x_{i_r})\cap V(x_{i_{r+1}},\cdots,x_{r_4})$, where $\{i_1,\cdots,i_4\}=\{1,\cdots,4\}$, we can check by using both criteria described in theorems (\ref{direct}) and (\ref{complement}) that the lifting of $U$ in $Spec(R)$, say $U_R$ is the following set

\begin{equation}
U_R=D(x_3)\cup D(x_1x_2)\cup (D(x_1x_4)\cap v((x_2,x_3)))\cup(D(x2x_4)\cap V((x_1,x_3)).
\end{equation}

On the other hand, one can directly check that the complement of this set is

\[Spec(R)\setminus U_R=U_R^{c}=\]

\[V((x_3,x_1x_2,x_1x_4,x_2x_4)\cup (V((x_1,x_3)\cap D(x_2))\cup (V(x_2,x_3,x_4)\cap D(x_1))
\]

\[\cup D(x_1x_2)\cup D(x_2x_3).
\]

Now, it can be seen by elementary arguments that $U_R^c$ is not closed. So, $U_R$ is not open. However, one can compute the reduction of $U_R$ to $Spec(S)$ to obtain

\begin{equation}
U=U_R\cap V(I)=D(x_1x_3x_4),
\end{equation}

So, again, $U$ is an open set.

In order to obtain more heuristic information regarding the topological structure of the (non-)finitely generated locus of our Frobenius algebras, it is worth to do the explicit identification of $U$ (resp. $U^c$) for more complex examples. So, the following sample provides a next natural step to pursue in order to characterize the complete topological structure of $U$ (resp. $U^c$).

\begin{example}
Let $k$ be a field of characteristic $p>0$, $R=k[[x_1\cdots,x_5]]$, and let $I=(x_1x_2x_3,x_3x_4,x_4x_5)$. 
\end{example}

We can check that for all $q=p^e$, with $e>0$.

\begin{equation}
(I^{[q]}:I)=I{[q]}+J_p^{
(e)}+((x_1x_2x_3x_4x_5)^{q-1})
\end{equation}

where $J_p^{(e)}=(x_1^{q-1}x_2^{q-1}x_3^{q}x_4^{q-1},x_3^{q-1}x_4^qx_5^{q-1})$.

So, in this case we should check a larger number of $G_d$-s using Theorems (\ref{direct}) and (\ref{complement}).

Finally, the main open question to solve is to determine if $U$ is always open or not! So, one initial way to proceed is to continue the computations for several (experimental) examples (like the former ones) in order to get a deeper intuition of the (non-)validity of this query.

\section*{Acknowledgements}
The authors want to thank the Universidad Nacional of Colombia. D. A. J. G\'omez-Ram\'irez thanks the Instituci\'on Universitaria Pascual Bravo and Visi\'on Real Cognitiva S.A.S and Johan Baena for all his kindness and support. Finally, Edisson Gallego thanks sincerely to Mordechai Katzman all the inspiring discussions.

\bibliographystyle{amsplain}
\providecommand{\bysame}{\leavevmode\hbox to3em{\hrulefill}\thinspace}
\providecommand{\MR}{\relax\ifhmode\unskip\space\fi MR }
\providecommand{\MRhref}[2]{%
  \href{http://www.ams.org/mathscinet-getitem?mr=#1}{#2}
}
\providecommand{\href}[2]{#2}

\end{document}